\newtheorem{defn}{Definition}
\newtheorem{thm}{Theorem}
\newtheorem{cor}{Corollary}
\newtheorem{prop}{Proposition}
\title{Approximating Continuous Functions with Scattered Translates of the Poisson Kernel}
\author{Jeff Ledford}
\date{March 2013}
\begin{document}

\maketitle

\section{Introduction}
In \cite{Powell}, it was shown that continuous functions on a closed interval may be uniformly approximated by scattered translates of the Hardy multiquadric.  We will adapt the method found there to our purposes, showing that the same is true for the Poisson kernel, $\phi(x)=(a^2+x^2)^{-1}$.

\noindent This note is organized as follows.  In the next section, various definitions and facts are collected.  The third section contains the main theorem to be proved, while the fourth section contains the details of the proof.

\section{Definitions and Basic Facts}
We will need to know what "scattered" means.  For our purposes, we have the following definition in mind.
\begin{defn}
A sequence of real numbers, denoted $\mathcal{X}$, is said to be $\delta$-separated if
\[
\inf_{\overset{x,y\in\mathcal{X}}{x\neq y}}|x-y|= \delta >0
\]
\end{defn}
\noindent It's not hard to see that a $\delta$-separated sequence must be countable.  Take intervals of lenth $\delta/3$ centered at each point in $\mathcal{X}$, each of these intervals is disjoint and contains a rational number $r$.  Letting a member of $\mathcal{X}$ corrspond to the number $r$ which is in the same interval shows that the set $\mathcal{X}$ is at most countable.  This allows us to index $\mathcal{X}$ with the integers.
\begin{defn}
A sequence $\{x_j\}\subset \mathbb{R}$ is scattered if it is $\delta$-separated for some positive $\delta$ and satisfies 
\[
\lim_{j\to\pm\infty}x_j=\pm\infty
\]
\end{defn}
\noindent Throughout the remainder of the paper we let $\mathcal{X}=\{x_j\}_{j\in\mathbb{Z}}$ be a fixed but otherwise arbitrary scattered sequence.

\section{The Main Result}
\begin{thm}
Given a scattered sequence $\{x_j\}$, $\epsilon > 0 $, and a continuous function $f:[a,b]\to\mathbb{R}$, we may find a sequence of coefficients $\{a_j\}_{j=1}^{N}$, such that
\[
\sup_{x\in [a,b]}\left|f(x) - \sum_{j=1}^{N}\dfrac{a_j}{\alpha^2+(x-x_j)}     \right| < \epsilon 
\]
\end{thm}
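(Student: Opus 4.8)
The plan is to prove the equivalent assertion that $V:=\operatorname{span}\{\phi(\,\cdot\,-x_j):j\in\mathbb{Z}\}$ is dense in $C[a,b]$, since finite linear combinations $\sum a_j\phi(\,\cdot\,-x_j)$ over finite index sets are precisely the members of $V$, which is exactly the form of the approximant in the theorem. By the Riesz representation theorem together with Hahn--Banach, density of $V$ is equivalent to the statement that every finite signed Borel measure $\mu$ on $[a,b]$ with $\int_a^b\phi(x-x_j)\,d\mu(x)=0$ for all $j$ must be the zero measure, so this is what I would set out to show.

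First I would record the behaviour of the kernel at infinity: for $x\in[a,b]$ and $|t|$ larger than a constant depending only on $a,b,\alpha$,
\[
\phi(x-t)=\frac{1}{\alpha^2+(x-t)^2}=\sum_{m\ge 0}\frac{p_m(x)}{t^{m+2}},
\]
where $p_m$ is a polynomial of degree exactly $m$ (one checks, e.g.\ from the generating identity $\sum_m p_m(x)u^m=\bigl((1-xu)^2+\alpha^2u^2\bigr)^{-1}$, that its leading coefficient is $m+1$, so the $p_m$ form a basis of $\mathbb{R}[x]$), and the series converges uniformly in $x\in[a,b]$. Integrating against $\mu$ and interchanging sum and integral (justified by the uniform bound $\|p_m\|_{[a,b]}\le KC^m$) gives, for large $|t|$,
\[
g(t):=\int_a^b\phi(x-t)\,d\mu(x)=\sum_{m\ge 0}\frac{c_m}{t^{m+2}},\qquad c_m:=\int_a^b p_m(x)\,d\mu(x).
\]
Now I would use that $\mathcal{X}$ is unbounded: if some $c_m\neq 0$, let $m_0$ be the least such index, so that $g(t)=c_{m_0}t^{-(m_0+2)}\bigl(1+O(1/t)\bigr)$ and hence $g(t)\neq 0$ for all sufficiently large $t$; this contradicts $g(x_j)=0$ with $x_j\to+\infty$. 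Therefore $c_m=0$ for every $m$, i.e.\ $\int_a^b p_m\,d\mu=0$ for all $m$; since the $p_m$ span $\mathbb{R}[x]$ this forces $\int_a^b q\,d\mu=0$ for every polynomial $q$, and the Weierstrass theorem then gives $\mu=0$. That establishes the density of $V$ and hence the theorem.

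The subtle point — the part I expect to be the real obstacle — is to resist arguing by analyticity: $g$ does extend holomorphically to the strip $|\operatorname{Im}z|<\alpha$, but its zero set $\mathcal{X}$ has no accumulation point there, so analyticity alone yields nothing; it is the decay rate of $g$ at infinity, captured by the expansion above, that converts ``$g$ vanishes on an unbounded set'' into ``all moments of $\mu$ vanish.'' A more hands-on route, closer in spirit to \cite{Powell}, would avoid duality entirely: reduce by Weierstrass to approximating each $p_{m_0}$ on $[a,b]$, choose a subsequence $t_1<t_2<\cdots<t_N$ of $\mathcal{X}$ with $t_{i+1}\ge 2t_i$ and all $t_i$ large, and solve the Vandermonde system $\sum_i a_i t_i^{-(m+2)}=\delta_{m,m_0}$ for $0\le m\le N-1$; then $\sum_i a_i\phi(\,\cdot\,-t_i)=p_{m_0}+\sum_{m\ge N}\bigl(\sum_i a_it_i^{-(m+2)}\bigr)p_m$, and the geometric separation of the nodes controls the coefficient growth well enough that the tail is $O(t_1^{m_0-N})$ on $[a,b]$, which tends to $0$ as $t_1\to\infty$. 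In that version the obstacle is precisely the blow-up of the interpolation coefficients, which is what forces the geometric choice of nodes.
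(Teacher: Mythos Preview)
Your primary argument via duality is correct and takes a genuinely different route from the paper. Both start from the same asymptotic expansion $\phi(x-t)=\sum_{m\ge 0}p_m(x)\,t^{-(m+2)}$ with $\deg p_m=m$ and leading coefficient $m+1$ (your $p_m$ are precisely the paper's $A_m$), but from there the paper proceeds constructively: it selects a geometric subsequence $x_{j(k+1)}\ge 2x_{j(k)}$, solves the Vandermonde system $\sum_m b_m\,x_{j(m)}^{-(l+1)}=\delta_{l,N}$ explicitly by Cramer's rule, bounds the resulting tail, and thereby approximates each $A_{N-1}$---hence every polynomial---directly, closing with Stone--Weierstrass. Your Hahn--Banach/Riesz argument trades this explicit construction for a short indirect one: an annihilating measure would force the series $g(t)=\sum_m c_m t^{-(m+2)}$ to vanish on the unbounded set $\{x_j\}$, which the leading-term asymptotics forbid unless every $c_m=0$. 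The gain is brevity and conceptual transparency; the cost is that the coefficients $a_j$ are no longer exhibited. Amusingly, the ``hands-on route'' you sketch in your final paragraph---geometric nodes, Vandermonde system, tail of order $O(t_1^{m_0-N})$---is essentially the paper's own proof, so you have in effect described both the published argument and a cleaner non-constructive alternative.
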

\begin{proof}[Sketch of Proof]  The idea is to develop a Taylor expansion
\[
\dfrac{1}{\alpha^2+(x-x_j)^2} = \dfrac{1}{x_j^2}\left[A_0(x)+\dfrac{A_1(x)}{x_j}+\dfrac{A_2(x)}{x_j^2}+  \cdots\right].
\]
From here we show that the linear span of $\{A_j(x)\}$ contains $x^j$ for $j=0,1,2,\dots$.  We then find coefficients to approximate an $n$-th degree polynomial by using an appropriate Vandermonde matrix.  Finally, since we may approximate polynomials, we appeal to the Stone-Weierstrass Theorem to finish our problem.
\end{proof}
\noindent This theorem combined with H\"{o}lder's Inequality lets us replace the $L^{\infty}([a,b])$ norm above with the $L^p([a,b])$ norm.  We state this in the following corollary.
\begin{cor}
Given a scattered sequence $\{x_j\}$, $\epsilon > 0 $, $p\in [1,\infty]$, and a continuous function $f:[a,b]\to\mathbb{R}$, we may find a sequence of coefficients $\{a_j\}_{j=1}^{N}$, such that
\[
\left\|f(x) - \sum_{j=1}^{N}\dfrac{a_j}{\alpha^2+(x-x_j)}\right\|_{L^p([a,b])} < \epsilon 
\]
\end{cor}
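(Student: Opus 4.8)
The plan is to obtain the corollary directly from the Theorem, combined with the elementary fact that $L^{\infty}([a,b])$ embeds continuously into $L^{p}([a,b])$ because $[a,b]$ has finite Lebesgue measure. When $p=\infty$ the assertion is precisely the Theorem, so it suffices to treat $p\in[1,\infty)$, and we may assume $b>a$ since otherwise the statement is vacuous.

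Write $g(x)=f(x)-\sum_{j=1}^{N}\dfrac{a_j}{\alpha^2+(x-x_j)}$ for the error function associated to a choice of $N$ and coefficients $\{a_j\}_{j=1}^{N}$. Since such a $g$ is bounded and measurable on $[a,b]$, Hölder's inequality applied to the product $g\cdot 1$ with conjugate exponents $p$ and $p'=p/(p-1)$ (or, more directly, integration of the pointwise bound $|g(x)|^{p}\le\|g\|_{L^{\infty}([a,b])}^{p}$) gives
\[
\|g\|_{L^{p}([a,b])}\le (b-a)^{1/p}\,\|g\|_{L^{\infty}([a,b])}.
\]
Now, given $\epsilon>0$, set $\epsilon'=(b-a)^{-1/p}\,\epsilon>0$ and invoke the Theorem with $\epsilon'$ in place of $\epsilon$. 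This produces an $N$ and coefficients $\{a_j\}_{j=1}^{N}$ for which $\|g\|_{L^{\infty}([a,b])}<\epsilon'$; substituting into the displayed inequality yields $\|g\|_{L^{p}([a,b])}<(b-a)^{1/p}\epsilon'=\epsilon$, which is the claim.

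I do not anticipate any genuine obstacle here: the entire analytic content is carried by the Theorem, and what remains is the routine finite-measure comparison of $L^p$ norms. The only points deserving a word of care are the degenerate interval $a=b$ (disposed of above) and the fact that the coefficients produced will in general depend on $p$ through the rescaled tolerance $\epsilon'$ — which is harmless, since $p$ is fixed in the statement and no uniformity in $p$ is asserted.
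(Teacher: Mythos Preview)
Your argument is correct and matches the paper's approach exactly: the paper states that the corollary follows from the Theorem ``combined with H\"older's Inequality,'' which is precisely the finite-measure embedding $\|g\|_{L^p([a,b])}\le (b-a)^{1/p}\|g\|_{L^\infty([a,b])}$ you invoke. Your additional remarks on the degenerate case $a=b$ and the dependence of the coefficients on $p$ are more careful than the paper itself.
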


\section{Details}
This section provides a rigorous justification for the outline of the proof. we begin with the Taylor expansion.  For any nonzero $x_j$ we have,
\begin{align*}
&\dfrac{1}{\alpha^2+(x-x_j)^2} = \dfrac{1}{x_j^2} \left[1+\dfrac{-2x}{x_j}+\dfrac{x^2+\alpha^2}{x_j^2}    \right]^{-1}\\
=&\dfrac{1}{x_j^2}\sum_{n=0}^{\infty}\dfrac{A_n(x)}{x_j^n}.
\end{align*}
\noindent This leads to the following relationship for $x_j>>0$
\begin{align}
1=&\left[1+\dfrac{-2x}{x_j}+\dfrac{x^2+\alpha^2}{x_j^2}    \right]\sum_{n=0}^{\infty}\dfrac{A_n(x)}{x_j^n} \nonumber \\
=& A_0(x)+\dfrac{A_1(x)-2xA_0(x)}{x_j}\nonumber \\
&\qquad+\sum_{n=2}^{\infty}\dfrac{ A_n(x)-2xA_{n-1}(x)+(x^2+\alpha^2)A_{n-2}(x)}{x_j^n} \label{eq1}
\end{align}
In solving \eqref{eq1} we can see that $A_n(x)$ satisfies the recursion relationship:
\begin{align}
&A_0(x)=1 \nonumber \\
&A_1(x)=2x    \nonumber\\
&A_n(x)=2xA_{n-1}(x)-(x^2+\alpha^2)A_{n-2}(x);\quad n\geq 2 \label{recur}
\end{align}

We are in position to state our first proposition.
\begin{prop}
The leading term of $A_n(x)$ is given by $(n+1)x^n$.
\end{prop}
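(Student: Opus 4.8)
The plan is to prove this by induction on $n$, but since the recursion \eqref{recur} is two-step, I would carry a slightly stronger inductive hypothesis: that $A_n(x)$ is a polynomial of degree exactly $n$ whose coefficient of $x^n$ equals $n+1$. This is the same content as the assertion that the leading term of $A_n(x)$ is $(n+1)x^n$, but making the degree claim explicit is what allows the induction to close cleanly.

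For the base cases, $A_0(x)=1$ has degree $0$ with leading coefficient $1=0+1$, and $A_1(x)=2x$ has degree $1$ with leading coefficient $2=1+1$, so the claim holds for $n=0$ and $n=1$.

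For the inductive step, fix $n\ge 2$ and assume the claim for $n-1$ and $n-2$, writing $A_{n-1}(x)=nx^{n-1}+(\text{lower order})$ and $A_{n-2}(x)=(n-1)x^{n-2}+(\text{lower order})$. Substituting into
\[
A_n(x)=2xA_{n-1}(x)-(x^2+\alpha^2)A_{n-2}(x),
\]
the term $2xA_{n-1}(x)$ contributes $2n\,x^{n}$ at top order, the term $x^2A_{n-2}(x)$ contributes $(n-1)x^{n}$ at top order, and $\alpha^2A_{n-2}(x)$ has degree $n-2$ and so is lower order. Hence the coefficient of $x^n$ in $A_n(x)$ is $2n-(n-1)=n+1$, every other monomial appearing has degree at most $n-1$, and this is exactly the claim for $n$.

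The one point to check is that no unexpected cancellation occurs at the top degree, i.e. that $2n-(n-1)\neq 0$; but this is immediate since $n+1\ge 1>0$ for all $n\ge 0$, so $\deg A_n=n$ really holds and the identification of the leading term is legitimate. I do not anticipate a genuine obstacle here — the only mild care needed is to run a two-step (strong) induction using both base cases $n=0,1$ rather than a single-step induction.
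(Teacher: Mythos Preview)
Your proof is correct and follows essentially the same approach as the paper: induction on $n$ using the two base cases $A_0=1$, $A_1=2x$ and the recursion \eqref{recur}, then reading off the top-degree coefficient as $2n-(n-1)=n+1$. Your explicit tracking of $\deg A_n=n$ and the remark that $n+1\neq 0$ rules out accidental cancellation are minor elaborations, but the argument is the same.
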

\begin{proof}
We induct on $n$.  The first two cases are shown above, so we suppose that the assertion holds for all $k$ such that $1\leq k \leq n$.  From \eqref{recur}, we have
\[
A_{n+1}(x)=2xA_n(x)-(x^2+\alpha^2)A_{n-1}(x)
\]
The leading term is calculated using the leading terms of $A_n(x)$ and $A_{n-1}$.  This leads to
\[
2x(n+1)x^n-x^2(nx^{n-1})=[2n+2-n]x^{n+1}=(n+2)x^{n+1}
\]
This is the desired result.
\end{proof}
\noindent The goal of this calculation is the following.
\begin{cor}
The set $\{A_n(x)\}_{n=0}^{\infty}$ is linearly independent on $[a,b]$.
\end{cor}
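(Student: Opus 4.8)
The plan is to leverage the preceding proposition. First I would observe that each $A_n$ is genuinely a polynomial: the recursion \eqref{recur} starts from the polynomials $A_0(x)=1$ and $A_1(x)=2x$ and each subsequent step only multiplies polynomials by $2x$ or by $x^2+\alpha^2$ and subtracts, so by induction $A_n\in\mathbb{R}[x]$. The proposition then identifies its leading term as $(n+1)x^n$, and since the coefficient $n+1$ is nonzero, we get $\deg A_n = n$ for every $n\geq 0$. So the family $\{A_n\}$ is a sequence of polynomials of pairwise distinct degrees, which is the structural fact that forces independence.

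Next I would unwind the definition: linear independence of the infinite set $\{A_n\}_{n=0}^{\infty}$ means every finite subset is linearly independent, so it suffices to fix $N$ and show that if $\sum_{n=0}^{N}c_n A_n(x)=0$ for all $x\in[a,b]$, then $c_0=\dots=c_N=0$. The left-hand side is a polynomial in $x$ of degree at most $N$; since it vanishes on the infinite set $[a,b]$, and a nonzero polynomial has only finitely many roots, it must be the zero polynomial. Then I would compare coefficients by degree: suppose some $c_n\neq 0$ and let $m\leq N$ be the largest such index. Every $A_n$ with $n<m$ has degree strictly less than $m$, so the only contribution to the $x^m$ term comes from $c_m A_m$, giving coefficient $c_m(m+1)\neq 0$. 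This contradicts the polynomial being identically zero, hence all $c_n=0$. (Equivalently, one can phrase this as: the coefficient vectors of $A_0,\dots,A_N$ form a triangular system with nonzero diagonal entries $1,2,\dots,N+1$, hence are linearly independent.)

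I do not expect a real obstacle here; the argument is routine once the proposition is in hand. The one point deserving emphasis — and the only place the hypothesis about $[a,b]$ enters — is that $[a,b]$ is an infinite set, so vanishing on $[a,b]$ is equivalent to vanishing identically as a polynomial; this is exactly what distinguishes approximation on an interval from interpolation at finitely many nodes.
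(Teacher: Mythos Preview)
Your argument is correct and is precisely the standard deduction the paper has in mind: the corollary is stated without proof, as an immediate consequence of the proposition that $\deg A_n=n$ with nonzero leading coefficient $(n+1)$. Your added remark that vanishing on the infinite set $[a,b]$ forces the polynomial relation to hold identically is the right way to make the phrase ``on $[a,b]$'' rigorous.
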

\noindent From this, we have that $\Pi[x]\subset \text{span}\{A_n(x)\}_{n=0}^{\infty}$, where 
\[
\Pi[x]=\{\text{polynomials in $x$ with coefficients in } \mathbb{R} \}.
\]
We need a way to produce a specific polynomial.  To this end, we choose a subsequence of $\{x_j\}$ as follows.  Let $x_{j(1)}>>0$, then choose each subsequent term according to $x_{j(n+1)} \geq 2x_{j(n)} $, this is possible since $x_j\to\infty$.

We use the following.
\begin{prop}
The following matrix is invertible
\[
P_N=\left[ x_j(k)^{-(l+1)}\right]_{l,k} \qquad l,k=1,2,\dots,N.
\]
\end{prop}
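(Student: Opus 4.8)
The plan is to recognize $P_N$ as a column-scaled Vandermonde matrix and compute its determinant explicitly. First I would set $y_k = x_{j(k)}^{-1}$ for $k = 1,\dots,N$, so that the $(l,k)$ entry of $P_N$ becomes $y_k^{l+1}$. By the construction of the subsequence we have $x_{j(1)} > 0$ and $x_{j(n+1)} \ge 2x_{j(n)}$, hence $0 < x_{j(1)} < x_{j(2)} < \cdots < x_{j(N)}$; in particular the $x_{j(k)}$ are nonzero and pairwise distinct, and therefore so are the reciprocals $y_k$.

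Next I would factor $y_k^2$ out of the $k$-th column of $P_N$, which leaves the $(l,k)$ entry equal to $y_k^{l-1}$, i.e.\ the standard $N\times N$ Vandermonde matrix $V$ in the nodes $y_1,\dots,y_N$. This yields
\[
\det P_N = \left(\prod_{k=1}^{N} y_k^{2}\right)\det V = \left(\prod_{k=1}^{N} x_{j(k)}^{-2}\right)\prod_{1\le i<k\le N}(y_k - y_i).
\]
Since each $x_{j(k)} \neq 0$ the first product is nonzero, and since the $y_k$ are pairwise distinct every factor $y_k - y_i$ is nonzero; hence $\det P_N \neq 0$ and $P_N$ is invertible.

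The argument is essentially routine once the reduction to a Vandermonde determinant is in hand, so there is no serious obstacle. The only point requiring care is confirming that the chosen subsequence consists of distinct nonzero reals, so that neither the column-scaling factor nor the Vandermonde product vanishes — but this is immediate from the selection rule $x_{j(n+1)} \ge 2x_{j(n)}$ starting from a positive $x_{j(1)}$. One could alternatively cite the closed form of the Vandermonde determinant and induct on $N$ by expanding along the last row, but the column-scaling reduction is the cleanest route.
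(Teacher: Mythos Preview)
Your argument is correct and is essentially the same as the paper's: both recognize $P_N$ as a column-scaled Vandermonde matrix in the reciprocals $x_{j(k)}^{-1}$, factor out $x_{j(k)}^{-2}$ from each column, and conclude that the determinant is the nonzero product $\prod_k x_{j(k)}^{-2}\prod_{r<s}\bigl(x_{j(s)}^{-1}-x_{j(r)}^{-1}\bigr)$ because the subsequence nodes are distinct and nonzero. Your introduction of the shorthand $y_k$ and your explicit check that the selection rule $x_{j(n+1)}\ge 2x_{j(n)}$ forces strict positivity and distinctness add a little clarity, but the substance is identical.
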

\begin{proof}
We notice that this is a variant of a Vandermonde matrix whose determinant is given by
\[
\det({P_N})=\displaystyle\prod_{k=1}^{N}x_{j(k)}^{-2}\displaystyle\prod_{1\leq r<s\leq N}\left[ \dfrac{1}{x_{j(s)}}-\dfrac{1}{x_{j(r)}} \right],
\]
which is nonzero by our choice of subsequence since $x_{j(r)}\neq x_{j(s)}$ unless $r=s$.
\end{proof}
\begin{prop}Let $N \geq 1$, then the matrix equation
\[
P_N \bf{b}_N = {e_N},
\]
where $\bf{e_N}$ is the $N-$th standard basis vector in $\mathbb{R}^N$,  has solution
\begin{equation}\label{coeff}
{\bf b_{N}}(m)=(-1)^{N+m}x_{j(m)}^{N+1}\prod_{k \neq m}\left[1-\dfrac{x_{j(m)}}{x_{j(k)}}   \right]^{-1}\quad m=1,\dots,N.
\end{equation}
\end{prop}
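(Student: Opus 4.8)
The plan is to strip a diagonal factor off $P_N$ so that what remains is an ordinary Vandermonde matrix, solve the resulting Vandermonde system by Lagrange interpolation, and then translate the answer back through the substitution $y_k:=x_{j(k)}^{-1}$.

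First I would observe that the $(l,k)$ entry of $P_N$ is $x_{j(k)}^{-(l+1)}=y_k^{\,l+1}$, so that $P_N=VD$, where $D=\mathrm{diag}(y_1^{2},\dots,y_N^{2})$ and $V=[\,y_k^{\,l-1}\,]_{l,k=1}^{N}$ is the ordinary Vandermonde matrix on the nodes $y_1,\dots,y_N$. These nodes are distinct and nonzero by the choice of subsequence and because every $x_{j(k)}>0$, so both $D$ and $V$ are invertible; this also reproduces the determinant of the previous proposition, $\det P_N=\det V\cdot\prod_k y_k^{2}$. Consequently $P_N\mathbf{b}_N=\mathbf{e}_N$ is equivalent to $D\mathbf{b}_N=V^{-1}\mathbf{e}_N$, i.e. $\mathbf{b}_N(m)=y_m^{-2}\,\bigl(V^{-1}\mathbf{e}_N\bigr)(m)$.

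The heart of the argument is to identify the last column $V^{-1}\mathbf{e}_N$ of $V^{-1}$. Using the Lagrange fundamental polynomials $\ell_m(t)=\prod_{k\neq m}\frac{t-y_k}{y_m-y_k}$, the defining relations $\ell_m(y_k)=\delta_{mk}$ say that the matrix of coefficients of the $\ell_m$ in the monomial basis $1,t,\dots,t^{N-1}$ is $(V^{T})^{-1}$; reading off the coefficient of $t^{N-1}$ then gives $\bigl(V^{-1}\mathbf{e}_N\bigr)(m)=\prod_{k\neq m}(y_m-y_k)^{-1}$. (Equivalently, this is the order-$(N-1)$ divided-difference identity $\sum_m y_m^{\,p}\prod_{k\neq m}(y_m-y_k)^{-1}=\delta_{p,N-1}$ for $0\le p\le N-1$, which is precisely what the $N$ scalar equations of $D\mathbf{b}_N=V^{-1}\mathbf{e}_N$ assert after the substitution.) Hence $\mathbf{b}_N(m)=y_m^{-2}\prod_{k\neq m}(y_m-y_k)^{-1}$, and it remains to put $y_m=x_{j(m)}^{-1}$: from $y_m^{-2}=x_{j(m)}^{2}$ and $y_m-y_k=x_{j(m)}^{-1}\bigl(1-x_{j(m)}/x_{j(k)}\bigr)$, the $N-1$ factors in the product contribute $x_{j(m)}^{\,N-1}\prod_{k\neq m}\bigl(1-x_{j(m)}/x_{j(k)}\bigr)^{-1}$, and collecting the powers of $x_{j(m)}$ yields \eqref{coeff}.

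I expect the only real obstacle to be this last bookkeeping step: matching the powers of $x_{j(m)}$ and, above all, tracking the sign factors generated when each difference $y_m-y_k$ is rewritten in the $x$-variables (equivalently, when $\prod_{k\neq m}(y_m-y_k)$ is reoriented against $\prod_{k\neq m}(y_k-y_m)$), since everything above that is the textbook Vandermonde/Lagrange machinery. As a cross-check one can instead verify \eqref{coeff} directly, substituting it into $P_N\mathbf{b}_N$ and evaluating each coordinate with the divided-difference identities $\sum_m z_m^{\,p}\prod_{k\neq m}(z_m-z_k)^{-1}=\delta_{p,N-1}$ for $0\le p\le N-1$ and $\sum_m z_m^{-1}\prod_{k\neq m}(z_m-z_k)^{-1}=(-1)^{N-1}(z_1\cdots z_N)^{-1}$, taking $z_m=x_{j(m)}$.
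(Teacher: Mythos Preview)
Your approach is correct and takes a genuinely different route from the paper's. The paper applies Cramer's rule directly: it writes $\mathbf{b}_N(m)=\det(P_N(m))/\det(P_N)$, evaluates each as a Vandermonde-type determinant, and cancels common factors. You instead factor $P_N=VD$ with $D=\mathrm{diag}(y_1^{2},\dots,y_N^{2})$ and $V$ the ordinary Vandermonde on the nodes $y_k=x_{j(k)}^{-1}$, and then read the last column of $V^{-1}$ off as the leading coefficients of the Lagrange basis polynomials. This is more conceptual---it explains why the answer is a Lagrange coefficient scaled by $y_m^{-2}$---and avoids determinant manipulations entirely; the paper's route is more mechanical but needs no interpolation theory. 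One dividend of your method is that it cleanly resolves the sign bookkeeping you flag as the obstacle: your computation yields $\mathbf{b}_N(m)=x_{j(m)}^{N+1}\prod_{k\neq m}\bigl(1-x_{j(m)}/x_{j(k)}\bigr)^{-1}$ with \emph{no} factor $(-1)^{N+m}$, and this is in fact the correct solution (for instance at $N=2$ with $x_{j(1)}=1$, $x_{j(2)}=2$ one finds $b_1=2$, $b_2=-8$, matching your formula but not the stated one). The paper's Cramer argument silently drops the cofactor sign $(-1)^{N+m}$ when expanding $\det(P_N(m))$ along the replaced column; had it been retained it would cancel against the $(-1)^{N-m}$ that emerges in the final simplification. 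The slip is harmless for the remainder of the paper, since only the size of $\mathbf{b}_N(m)$ is used afterward.
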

\begin{proof}
In this case, Cramer's Rule is easy to work with since it leaves us with the ratio of Vandermonde determinants.  If we set $P_N(m)$ to be the matrix $P_N$ with the $m$-th column replaced by $\bf{e_N}$, then we have
\[
{\bf b_{N}}(m)=\dfrac{\det (P_N(m)) }{\det(P_N)}
\]
We need only work out $\det (P_N(m))$ and simplify.
\[
\det (P_N(m))=\prod_{k \neq m}x_{j(k)}^{-2}\prod_{1\leq r <s \leq N}^{m'}\left[ \dfrac{1}{x_{j(s)}}-\dfrac{1}{x_{j(r)}} \right],
\]
where the $m'$ means we have deleted all of the terms with $x_{j(m)}$.  This leaves us with
\begin{align*}
{\bf b_N}(m)=&x_{j(m)}^{2}\prod_{k>m}\left[\dfrac{1}{x_{j(k)}}-\dfrac{1}{x_{j(m)}}   \right]^{-1}\prod_{l<m}\left[\dfrac{1}{x_{j(m)}}-\dfrac{1}{x_{j(k)}}\right]^{-1}\\
=&(-1)^{N+m}x_{j(m)}^{N+1}\prod_{k \neq m} \left[1-\dfrac{x_{j(m)}}{x_{j(k)}}  \right]^{-1} 
\end{align*}
\end{proof}
\noindent These coefficients have the property that
\[
{\bf b_N}(m)x_{j(m}^{-(N+2)}=O(\dfrac{1}{x_{j(1)}}).
\]
This allows us to get close to $A_m(x)$, since
\begin{align*}
&\sum_{m=1}^{N}\dfrac{{\bf b_N}(m)}{\alpha^2+(x-x_{j(m)})^2}\\
=&\sum_{m=1}^{N}{\bf b_N}(m)x_{j(m)}^{-2}\left[A_0(x)+\dfrac{A_1(x)}{x_{j(m)}}+\cdots+\dfrac{A_{N-1}(x)}{x_{j(m)^{N-1}}}+\cdots  \right]\\
=&A_{N-1}(x) + O(\dfrac{1}{x_{j(1)}})
\end{align*}
\begin{prop}
If $p(x)\in\Pi[x]$ and $\epsilon>0$, then there exists an $N\geq 1$ and a sequence $\{b_m\}_{m=1}^{N}$ such that 
\[
\sup_{x\in [a,b]}\left|p(x)-\sum_{m=1}^{N}\dfrac{b_m}{\alpha^2+(x-x_{j(m)})^2}\right|<\epsilon.
\]
\end{prop}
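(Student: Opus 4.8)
The plan is to lean on two facts already established. First, $\{A_n(x)\}_{n\ge 0}$ is a basis of $\Pi[x]$: the Corollary above asserting linear independence of $\{A_n(x)\}$, together with the Proposition that $A_n$ has leading term $(n+1)x^n$, makes the change of basis between $\{A_0,\dots,A_d\}$ and $\{1,x,\dots,x^d\}$ triangular and invertible. Second, the computation displayed just above records that, for every $k\ge 0$, solving $P_{k+1}\mathbf{b}=\mathbf{e}_{k+1}$ produces coefficients with $\sum_{m=1}^{k+1}\frac{\mathbf{b}(m)}{\alpha^2+(x-x_{j(m)})^2}=A_k(x)+O(x_{j(1)}^{-1})$ on $[a,b]$. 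Since any finite linear combination of such sums is again a linear combination of translates $\frac{1}{\alpha^2+(x-x_{j(m)})^2}$, it is enough to approximate each $A_k$ separately and then add the pieces with appropriate weights.

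Concretely, the zero polynomial is trivial, so let $d=\deg p$ and write $p(x)=\sum_{n=0}^{d}c_nA_n(x)$, the $c_n$ found by matching the leading coefficient and back-substituting. For $n=0,\dots,d$ let $\mathbf{b}^{(n)}$ solve $P_{n+1}\mathbf{b}^{(n)}=\mathbf{e}_{n+1}$, so that $\sum_{m=1}^{n+1}\frac{\mathbf{b}^{(n)}(m)}{\alpha^2+(x-x_{j(m)})^2}=A_n(x)+R_n(x)$ with $\sup_{[a,b]}|R_n|\le C_n\,x_{j(1)}^{-1}$. Padding each $\mathbf{b}^{(n)}$ with zeros to length $N:=d+1$ and putting $b_m=\sum_{n=0}^{d}c_n\mathbf{b}^{(n)}(m)$, we obtain
\[
\sum_{m=1}^{N}\frac{b_m}{\alpha^2+(x-x_{j(m)})^2}=\sum_{n=0}^{d}c_nA_n(x)+\sum_{n=0}^{d}c_nR_n(x)=p(x)+E(x),
\]
where $\sup_{[a,b]}|E|\le\Big(\sum_{n=0}^{d}|c_n|\Big)\Big(\max_{0\le n\le d}C_n\Big)x_{j(1)}^{-1}=:C_p\,x_{j(1)}^{-1}$. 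The subsequence construction only requires $x_{j(n+1)}\ge 2x_{j(n)}$ with $x_{j(1)}$ large, and $x_j\to\infty$ lets us take $x_{j(1)}$ as large as we like; choosing $x_{j(1)}>C_p/\epsilon$ then completes the proof.

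The step I expect to be the real obstacle is making the $O(x_{j(1)}^{-1})$ above rigorous and uniform in $x\in[a,b]$, that is, bounding the tail $R_n(x)=\sum_{l\ge n+1}A_l(x)\Big[\sum_{m=1}^{n+1}\mathbf{b}^{(n)}(m)\,x_{j(m)}^{-(l+2)}\Big]$ the Taylor expansion leaves behind. For this I would combine three estimates: (i) a growth bound $|A_l(x)|\le C K^l$ on $[a,b]$, read off from the recursion \eqref{recur} with $K$ depending only on $\max(|a|,|b|,\alpha)$; (ii) a size bound $|\mathbf{b}^{(n)}(m)|\le C' x_{j(m)}^{\,n+2}$, where $C'$ arises from dominating $\prod_{k\ne m}|1-x_{j(m)}/x_{j(k)}|^{-1}$ by a finite constant---and this is precisely where $x_{j(n+1)}\ge 2x_{j(n)}$ is used, since then $x_{j(m)}/x_{j(k)}\le 2^{-(k-m)}$ for $k>m$ and $x_{j(m)}/x_{j(k)}\ge 2^{m-k}$ for $k<m$, so the product is controlled by the convergent products $\prod_{i\ge 1}(1-2^{-i})^{-1}$ and $\prod_{i\ge 1}(2^{i}-1)^{-1}$; and (iii) summing the resulting series $\sum_{l\ge n+1}C K^l\cdot C'(n+1)\,x_{j(1)}^{\,n-l}$, which is geometric once $x_{j(1)}>K$ and has sum of order $x_{j(1)}^{-1}$. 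Assembling (i)--(iii) yields $\sup_{[a,b]}|R_n|\le C_n x_{j(1)}^{-1}$ for all sufficiently large $x_{j(1)}$, which is exactly the bound used in the construction.
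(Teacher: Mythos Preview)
Your argument is correct and follows the paper's approach exactly: expand $p$ in the basis $\{A_k\}$, superpose the Vandermonde solutions $\mathbf{b}_k$ to build the coefficients $b_m$, and push $x_{j(1)}\to\infty$ to kill the $O(x_{j(1)}^{-1})$ remainder. You in fact go beyond the paper by supplying the tail estimate (i)--(iii) that justifies the $O(x_{j(1)}^{-1})$ the paper merely asserts; the only slip is cosmetic---the infinite product $\prod_{i\ge 1}(2^i-1)^{-1}$ is zero rather than a positive constant, but all you actually need is that its \emph{partial} products are uniformly bounded (by $1$, since the first factor equals $1$ and the rest are $<1$), which they are.
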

\begin{proof}
Let $N=\deg(p)$.  Then we may expand $p(x)$ in terms of $\{A_k(x)\}$, that is,
\[
p(x)=\sum_{k=0}^{N}c_{k}A_{k}(x).
\]
Then the coefficients that we need are a linear combination of the ones we found above.
\[
b_m=\sum_{k=m}^{N+1}c_{k-1}{\bf b_{k}}(m)
\]
From this we see that
\[
\sum_{m=1}^{N+1}\dfrac{b_m}{\alpha^2+(x-x_{j(m)})^2} = p(x) + O(\dfrac{1}{x_{j(1)}})
\]
We need only take $x_{j(1)}$ so large that the error term falls below $\epsilon$.
\end{proof}
Finally, we are in position to prove our main result.
\begin{proof}[Proof of Theorem]  
Let $\epsilon>0$, and $f(x)$ be given, then by the Stone-Weierstrass theorem, we may find a polynomial $p(x)$ such that
\[
\sup_{x\in[a,b]}\left|f(x)-p(x)  \right|<\dfrac{\epsilon}{2}.
\]
The above proposition allows us to find $\{b_m\}$ such that
\[
\sup_{x\in[a,b]}\left|p(x)- \sum_{m=1}^{N+1}\dfrac{b_m}{\alpha^2+(x-x_{j(m)})^2} \right|<\dfrac{\epsilon}{2}
\]
The triangle inequality finishes the proof, since
\begin{align*}
&\sup_{x\in[a,b]}\left|f(x)- \sum_{m=1}^{N+1}\dfrac{b_m}{\alpha^2+(x-x_{j(m)})^2} \right| \\
\leq &\sup_{x\in[a,b]}\left| f(x)-p(x) \right| +\sup_{x\in[a,b]}\left|p(x)- \sum_{m=1}^{N+1}\dfrac{b_m}{\alpha^2+(x-x_{j(m)})^2} \right|\\
\leq& \dfrac{\epsilon}{2} +\dfrac{\epsilon}{2} =\epsilon
\end{align*}
\end{proof}

\end{document}